\theoremstyle{plain}
\newtheorem{Thm}{Theorem}[section]
\newtheorem{Lem}[Thm]{Lemma}
\newtheorem{Prop}[Thm]{Proposition}
\theoremstyle{definition}
\newtheorem{Defi}[Thm]{Definition}
\tikzstyle{vertex}=[circle, draw, inner sep=0pt, minimum size=6pt] 
\newcommand{\RR}{\mathbb{R}} 
\newcommand{\TTT}{\mathcal{T}} 
\title{A graph with the partial order competition dimension greater than five}
\author{
\textsc{Jihoon Choi}%
\footnote{Department of Mathematics Education,
Cheongju University, Cheongju 28503, Korea.
\textit{E-mail}: \texttt{jihoon@cju.ac.kr}}
\quad
\textsc{Soogang Eoh}%
\footnote{Department of Mathematics Education,
Seoul National University, Seoul 08826, Korea.
\textit{E-mail}: \texttt{mathfish@snu.ac.kr}}
\quad
\textsc{Suh-Ryung Kim}%
\footnote{Department of Mathematics Education,
Seoul National University, Seoul 08826, Korea.
\textit{E-mail}: \texttt{srkim@snu.ac.kr}}
}
\date{}
\begin{document}

\maketitle

\begin{abstract}
In this paper, we use the Erd\"{o}s-Szekeres lemma to show that there exists a graph with partial  order competition dimension greater than five.
\end{abstract}


\noindent
{\bf Keywords:}
competition graph,
$d$-partial order,
partial order competition dimension, order type, complete multipartite graph, Erd\"{o}s-Szekeres lemma

\noindent
{\bf 2010 Mathematics Subject Classification:} 05C20, 05C75

\section{Introduction}

The \emph{competition graph} $C(D)$ of a digraph $D$ is an undirected graph which
has the same vertex set as $D$ and which has an edge $xy$
between two distinct vertices $x$ and $y$
if and only if there exists a vertex $z \in V(D)$
such that $(x,z)$ and $(y,z)$ are arcs in $D$.
The notion of a competition graph was introduced by Cohen~\cite{cohen1968interval} as a means of determining the smallest dimension of ecological phase space.
In ecology, a species is sometimes characterized by the ranges of all of the different environmental factors which define its normal healthy environment.
For example, the normal healthy environment is determined by a range of values of temperature, of light, of pH, of moisture, and so on.
If there are $n$ factors in all, and each defines an interval of values, then the corresponding region in $n$-space is a box.
This box corresponds to what is frequently called in ecology the ecological niche of the species.
Hutchinson in 1944, for example, defines the ecological niche as ``the sum of all the environmental factors acting on an organism; the niche thus defined is a region of $n$-dimensional hyper-space, comparable to the phase-space of statistical mechanics.''
For this reason, the $n$-dimensional Euclidean space defined by the $n$ factors is sometimes called an ecological phase space.

We may assume that a species $x$ \emph{is superior to} another species $y$ \emph{in a certain factor} if $x$ can survive in a harsher condition of the factor than $y$.
Given factors, we say that a species $x$ preys on $y$ if $x$ is superior to $y$ in each of the given factors.
Then, given factors, we say that two distinct species \emph{compete} if $x$ and $y$ are superior to a species $z$.

Suppose we have some independent information about when two distinct species compete.
We can then ask how many dimensions are required of an ecological phase space so that we can embed each species in this space so that two species compete if and only if the independent information tells us they should.
This question can be formulated in the following way.

Let $d$ be a positive integer.
For $x = (x_1,x_2,\ldots, x_d)$,
$y = (y_1,y_2,\ldots, y_d) \in \mathbb{R}^d$,
we write
$x \prec y$ (resp.\ $x \preceq y$)
if $x_i < y_i$ (resp.\ $x_i \le y_i$) for each $i = 1, \ldots, d$.
If $x \preceq y$ or $y \preceq x$, then we say that $x$ and $y$ are \emph{comparable} in $(\RR^d, \preceq)$.
Otherwise, we say that $x$ and $y$ are \emph{incomparable} in $(\RR^d, \preceq)$.

For a finite subset $S$ of $\mathbb{R}^d$,
let $D_S$ be the digraph defined by $V(D_S) = S$ and
$A(D_S) = \{(x,y) \mid x, y \in S, \ y \prec x \}$.
A digraph $D$ is called a \emph{$d$-partial order}
if there exists a finite subset $S$ of $\mathbb{R}^d$
such that $D$ is isomorphic to the digraph $D_S$.
By convention, the zero-dimensional Euclidean space $\RR^0$ consists of a single point $0$.
In this context, we define a digraph with exactly one vertex as a $0$-partial order.
A $2$-partial order is also called a \emph{doubly partial order}.
Cho and Kim~\cite{cho2005class} studied the competition graphs of doubly partial orders
and showed that interval graphs are exactly the
graphs having partial order competition dimensions at most two.
Several variants of competition graphs of doubly partial orders
also have been studied
(see \cite{kim2009niche, kim2014competition, lu2009two, park2011m, park2013phylogeny, wu2010dimension}).
To extend these results, Choi {\it et al.}~\cite{choi2016competition} introduced the notion of the partial order competition dimension of a graph.

\begin{Defi}
For a graph $G$,
the \emph{partial order competition dimension} $\dim_{\text{{\rm poc}}}(G)$ of $G$
is the minimum nonnegative integer $d$
such that $G$ together with sufficiently many isolated vertices
is the competition graph of some $d$-partial order $D$,
i.e.,
\[
\dim_{\text{{\rm poc}}}(G) := \min \{d \in \mathbb{Z}_{\geq 0} \mid
\exists k \in \mathbb{Z}_{\geq 0}, \exists S \subseteq \mathbb{R}^d
\text{ s.t. } G \cup I_k = C(D_S)
\},
\]
where $\mathbb{Z}_{\geq 0}$ is the set of nonnegative integers and $I_k$ is a set of $k$ isolated vertices.
\end{Defi}
\noindent They characterized the graphs with partial order competition dimensions up to two.
\begin{Prop}\label{prop:dim0}~(\cite{choi2016competition})
Let $G$ be a graph.
Then, $\dim_{\text{{\rm poc}}}(G)= 0$ if and only if $G = K_1$.
\end{Prop}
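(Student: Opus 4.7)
The plan is to unpack the definition of a $0$-partial order and observe that the content collapses to a simple counting argument about vertices. By the convention stated in the paper, a $0$-partial order is a digraph with exactly one vertex, which in particular has no arcs; hence its competition graph is the single-vertex graph $K_1$. This observation pins down the only possible value of $C(D_S)$ when $d = 0$, so both directions of the equivalence reduce to bookkeeping.

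For the backward direction, I would just exhibit a witness. Given $G = K_1$, take $S = \{0\} \subseteq \RR^0$ and $k = 0$. Then $D_S$ has a single vertex and no arcs, so $C(D_S) = K_1 = G \cup I_0$. This shows $\dim_{\text{{\rm poc}}}(G) \leq 0$, and since the dimension is a nonnegative integer, equality follows.

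For the forward direction, suppose $\dim_{\text{{\rm poc}}}(G) = 0$. Then there exist $k \in \ZZ_{\geq 0}$ and a finite subset $S$ of $\RR^0$ such that $G \cup I_k = C(D_S)$. Since $\RR^0$ consists of a single point, $S$ has at most one element; as the right-hand side must realize a graph containing $G$, we must have $|S| = 1$, so $C(D_S) = K_1$. Therefore $|V(G \cup I_k)| = 1$, which forces $G = K_1$ and $k = 0$, completing the equivalence.

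There is no real obstacle here; the statement is essentially a sanity check confirming that the convention ``a single-vertex digraph is a $0$-partial order'' correctly aligns with the general definition of $\dim_{\text{{\rm poc}}}$. The only thing to be careful about is ruling out the degenerate possibility $S = \emptyset$, which is excluded as soon as one takes the standing assumption that graphs have a nonempty vertex set.
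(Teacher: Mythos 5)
Your proof is correct: the paper itself gives no proof of this proposition (it is quoted from \cite{choi2016competition}), and your direct unpacking of the convention that a $0$-partial order is a single-vertex digraph, together with the check that $S=\emptyset$ cannot occur, is exactly the intended and essentially the only argument. Nothing is missing.
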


\begin{Prop}\label{prop:dim1}~(\cite{choi2016competition})
Let $G$ be a graph.
Then, $\dim_{\text{{\rm poc}}}(G)= 1$ if and only if
$G = K_{t+1}$ or
$G = K_t \cup K_1$ for some positive integer $t$.
\end{Prop}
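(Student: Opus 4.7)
The plan is to prove both directions after first computing $C(D_S)$ for an arbitrary finite $S \subset \RR$. Writing $S = \{s_1 < s_2 < \cdots < s_n\}$, the arc set of $D_S$ is $\{(s_i,s_j) : j < i\}$, so the out-neighborhood of $s_i$ is $\{s_1,\ldots,s_{i-1}\}$. Two distinct vertices $s_i,s_j$ share a common out-neighbor iff $\min(i,j) \geq 2$, since $s_1$ is then a common prey while $s_1$ itself has none. Hence
\[
C(D_S) \;=\; \begin{cases} K_1 & \text{if } n = 1, \\ K_{n-1} \cup K_1 & \text{if } n \geq 2. \end{cases}
\]

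For the sufficiency direction, given $G = K_{t+1}$ with $t \geq 1$, I would take $S = \{1,2,\ldots,t+2\}$; then $C(D_S) = K_{t+1} \cup K_1 = G \cup I_1$, so $\dim_{\text{{\rm poc}}}(G) \leq 1$. Given $G = K_t \cup K_1$ with $t \geq 1$, I would take $S = \{1,2,\ldots,t+1\}$; then $C(D_S) = K_t \cup K_1 = G$, again yielding $\dim_{\text{{\rm poc}}}(G) \leq 1$. In either case $G \neq K_1$ because $t \geq 1$, so Proposition~\ref{prop:dim0} forces $\dim_{\text{{\rm poc}}}(G) \geq 1$, and equality holds.

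For the necessity direction, suppose $\dim_{\text{{\rm poc}}}(G) = 1$, so $G \cup I_k = C(D_S)$ for some $k \geq 0$ and some finite $S \subset \RR$ of size $n$. By the first step, $G \cup I_k$ is either $K_1$ (if $n = 1$) or $K_{n-1} \cup K_1$ (if $n \geq 2$). Since every vertex of $I_k$ is isolated in $G \cup I_k$, all $n-1$ vertices of the clique part must lie in $V(G)$. A short case analysis on whether the remaining isolated vertex lies in $V(G)$ or in $V(I_k)$ leaves exactly two possibilities: $G = K_{n-1} \cup K_1$ (a graph of the form $K_t \cup K_1$ with $t = n-1$) or $G = K_{n-1}$ (a graph of the form $K_{t+1}$ with $t = n-2$). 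The borderline values $n = 1, 2$ are handled by the same accounting, with Proposition~\ref{prop:dim0} ruling out the stray case $G = K_1$ (which has dimension zero).

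No step poses a genuine obstacle; the argument is essentially the observation that the digraph of any totally ordered finite set in $\RR$ is a transitive tournament whose competition graph is forced. The only point requiring care is the bookkeeping at small $n$ to ensure that $G = K_1$ is correctly excluded, and this is immediate from Proposition~\ref{prop:dim0}.
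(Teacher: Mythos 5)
Your proposal is correct. Note that the paper itself does not prove this statement; it imports it from \cite{choi2016competition} as Proposition~\ref{prop:dim1}, so there is no in-paper argument to compare against. Your computation of $C(D_S)$ for a finite $S\subset\RR$ is right: $D_S$ is a transitive tournament, the out-neighborhood of the $i$th smallest point is the set of the $i-1$ smaller points, so two vertices compete iff neither is the minimum, giving $C(D_S)=K_{n-1}\cup K_1$ for $n\ge 2$ and $K_1$ for $n=1$. Both directions then follow as you describe, and your use of Proposition~\ref{prop:dim0} to exclude $G=K_1$ and to get the lower bound $\dim_{\text{\rm poc}}(G)\ge 1$ closes the argument. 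The only point worth tightening in a written version is the case $n=2$ in the necessity direction, where the ``clique part'' $K_{n-1}=K_1$ is itself isolated and so could a priori sit in $I_k$; but that forces $G=K_1$, which Proposition~\ref{prop:dim0} rules out exactly as you say. This is the standard elementary proof one would expect for this fact.
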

\begin{Prop}\label{prop:dim2}~(\cite{choi2016competition})
Let $G$ be a graph.
Then, $\dim_{\text{{\rm poc}}}(G) = 2$
if and only if
$G$ is an interval graph which is neither $K_s$ nor $K_t \cup K_1$
for any positive integers $s$ and $t$.
\end{Prop}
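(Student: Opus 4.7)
The plan is to reduce the claim to the theorem of Cho and Kim~\cite{cho2005class}, which asserts that a graph (possibly together with isolated vertices) arises as the competition graph of a doubly partial order if and only if it is an interval graph. Granted this, the equivalence ``$\dim_{\text{poc}}(G) \leq 2$ if and only if $G$ is an interval graph'' is immediate from the definition of $\dim_{\text{poc}}$, and the statement of the proposition follows by separating off the graphs with $\dim_{\text{poc}}(G) \leq 1$, which Propositions~\ref{prop:dim0} and~\ref{prop:dim1} have already classified as $K_s$ and $K_t \cup K_1$.

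For the forward direction, I would assume $\dim_{\text{poc}}(G) = 2$. By definition there exist $k \in \ZZ_{\geq 0}$ and a finite set $S \subseteq \RR^2$ with $G \cup I_k = C(D_S)$, so Cho--Kim forces $G \cup I_k$, and hence $G$ (as an induced subgraph of $G\cup I_k$), to be an interval graph. If $G$ were $K_s$ or $K_t \cup K_1$, then Proposition~\ref{prop:dim0} or~\ref{prop:dim1} would give $\dim_{\text{poc}}(G) \leq 1$, contradicting $\dim_{\text{poc}}(G)=2$.

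For the reverse direction, suppose $G$ is an interval graph and is neither $K_s$ nor $K_t \cup K_1$. Since adding isolated vertices preserves intervality, Cho--Kim furnishes $k$ and $S \subseteq \RR^2$ with $G \cup I_k = C(D_S)$, whence $\dim_{\text{poc}}(G) \leq 2$. The hypothesis on $G$, read through Propositions~\ref{prop:dim0} and~\ref{prop:dim1}, rules out $\dim_{\text{poc}}(G) \leq 1$, so equality holds.

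The main obstacle, and essentially the only nontrivial content, is the Cho--Kim theorem itself. Its easy direction (that $C(D_S)$ is an interval graph for every finite $S \subseteq \RR^2$) is proved by building an interval representation directly from the coordinates of the points of $S$; the harder direction, that every interval graph is realisable in this way, is a concrete construction that assigns a point to each vertex from the endpoints of its interval and then places auxiliary points at the ``lower-left corner'' of each maximal clique to serve as the common prey of its members, with the Helly property of interval graphs preventing spurious competitions. Once that external result is in hand, the proof of Proposition~\ref{prop:dim2} is a short bookkeeping exercise combining it with Propositions~\ref{prop:dim0} and~\ref{prop:dim1}.
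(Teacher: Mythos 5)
The paper states this proposition without proof, citing \cite{choi2016competition}; there is no in-paper argument to compare against. Your reduction is correct and is the standard one: the Cho--Kim theorem (quoted in the paper's introduction as ``interval graphs are exactly the graphs having partial order competition dimension at most two'') gives the equivalence $\dim_{\text{poc}}(G)\le 2 \iff G$ is interval, and Propositions~\ref{prop:dim0} and~\ref{prop:dim1} peel off exactly the graphs $K_s$ and $K_t\cup K_1$ with dimension at most one, so both directions follow by bookkeeping as you describe.
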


Characterizing the graphs with partial order competition dimension at most three does not seem to be easy. To approach the problem,
Choi {\it et al.}~\cite{choi2016competition} showed that trees and cycles have partial order competition dimensions at most three.

\noindent They also obtained the following useful result.
\begin{Prop}[\cite{choi2016competition}]\label{prop:induced}
Let $G$ be a graph and let $H$ be an
induced subgraph of $G$. Then
$\dim_{\text{{\rm poc}}}(H) \leq \dim_{\text{{\rm poc}}}(G)$.
\end{Prop}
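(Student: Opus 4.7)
The plan is: given a realization $C(D_S) = G \cup I_k$ with $S \subseteq \RR^d$ and $d = \dim_{\text{{\rm poc}}}(G)$, construct $T \subseteq \RR^d$ in the same dimension so that $C(D_T) = H \cup I_{\ell}$ for some $\ell$. I identify $V(G) \cup V(I_k)$ with $S$ via the isomorphism $\phi$, and let $I^*$ denote the set of vertices that are isolated in $C(D_S)$ (this always contains $V(I_k)$ and any isolated vertices of $G$). My candidate is $T = \phi(V(H) \cup I^*)$: keep the images of $V(H)$ together with those of every isolated vertex of the original competition graph, and discard everything else.

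I would then verify $C(D_T) = H \cup I_{|I^* \setminus V(H)|}$ in three steps. (i) A non-edge of $H$ stays a non-edge in $C(D_T)$: since $H$ is induced in $G$, such a pair has no common prey in $S$, hence none in the subset $T$. (ii) Each vertex of $\phi(I^* \setminus V(H))$ stays isolated in $C(D_T)$: being isolated in $C(D_S)$, it shares no prey with any other point of $S$, hence with none in $T \subseteq S$. (iii) Every edge $u_1 u_2 \in E(H)$ still needs a common prey inside $T$.

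Step (iii) is the main obstacle, because the common prey promised by $C(D_S) = G \cup I_k$ may correspond to a non-isolated vertex of $G$ outside $V(H)$, which has been discarded. To supply the needed witness I would prove a minimality observation: for every edge $u_1 u_2 \in E(H)$, any $\prec$-minimal common prey $w \in S$ of $\phi(u_1), \phi(u_2)$ belongs to $\phi(I^*)$. Such a minimum exists because at least one common prey exists (since $u_1 u_2 \in E(G)$) and $S$ is finite. If $w$ had any prey $y \in S$, transitivity of $\prec$ would give $\phi(y) \prec \phi(u_1), \phi(u_2)$, making $y$ a strictly smaller common prey of $\phi(u_1), \phi(u_2)$ and contradicting the minimality of $w$. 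Therefore $w$ has no preys in $S$, in particular no shared preys, so $w \in I^*$. Hence $\phi(w) \in T$ witnesses the edge $u_1 u_2$ in $C(D_T)$, completing the verification and yielding $\dim_{\text{{\rm poc}}}(H) \le d = \dim_{\text{{\rm poc}}}(G)$.
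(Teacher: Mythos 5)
The paper does not prove this proposition; it is quoted from \cite{choi2016competition} without proof, so there is no in-paper argument to compare against. Your proof is correct and complete. The one step that genuinely requires an idea --- that an edge $u_1u_2$ of $H$ might lose its only witnesses when the vertices of $V(G)\setminus V(H)$ are deleted --- is handled properly by your minimality observation: a $\prec$-minimal common prey $w$ of $\phi(u_1),\phi(u_2)$ can have no out-neighbor in $D_S$ (else transitivity of $\prec$ would produce a strictly smaller common prey), hence $w$ is isolated in $C(D_S)$ and survives into $T=\phi(V(H)\cup I^*)$. The remaining verifications (non-edges of the induced subgraph $H$ stay non-edges because $T\subseteq S$, and isolated vertices of $C(D_S)$ stay isolated in any sub-poset) are immediate, as you say. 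There are only cosmetic slips: you write $\phi(y)$ and $\phi(w)$ for points $y,w$ that already lie in $S$, and ``minimum'' where you mean ``minimal element''; neither affects the argument. This is essentially the standard proof of heredity for $\dim_{\text{poc}}$ under induced subgraphs.
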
 
Choi {\it et al.}~\cite{choi2017partial} showed that there are chordal graphs whose partial order competition dimensions are greater than three whereas block graphs which are diamond-free chordal graphs have partial order competition dimensions at most three.
In addition, Choi {\it et al.}~\cite{choi2018partial} studied partial order competition dimensions of bipartite graphs and planar graphs and showed that their partial order competition dimensions are at most four.

Based upon the existing results on partial order competition dimensions of graphs, it is natural to ask whether or not there exists a graph with partial order competition dimension greater than four.
As a matter of fact, finding a graph with a fairly large partial order competition dimension is interesting as it shows the existence of a complex ecosystem.
In this paper, we use the Erd\"{o}s-Szekeres lemma to show that there exists a graph with partial  order competition dimension greater than five.

\section{Order types of two points in $\mathbb{R}^d$}

In this paper, we adopt the following notations.
For a positive integer $d$, we denote the set $\{1,2,\ldots,d\}$ by $[d]$.
For a subset $S$ of $[d]$, we use the notation $\overline{S}$ instead of $[d]\setminus S$
when it is obvious that $[d]$ is the universe. 
For a point $u$ in $\RR^d$, we denote the $i$th coordinate of $u$ by $[u]_i$ so that $u = ([u]_1, [u]_2, \ldots, [u]_d)$.
Take two distinct points $u$ and $v$ in $\RR^n$.
For a nonempty subset $S$ of $[d]$, we write $u \preceq_S v$ if $[u]_i \le [v]_i$ for each $i \in S$.
In addition, we denote the point $(\min\{[u]_1,[v]_1\}, \ldots, \min\{[u]_d,[v]_d\})$  in $\RR^d$ by $\min\{u,v\}$.
Therefore $[\min\{u,v\}]_i = \min\{ [u]_i, [v]_i\}$ for any $i \in [d]$.

Suppose that $u$ and $v$ are incomparable in $(\RR^d, \preceq)$.
Then there exist $i$ and $j$ in $[d]$ such that $[u]_i < [v]_i$ and $[u]_j > [v]_j$.
Therefore, the set $T$ of indices satisfying $[u]_k < [v]_k$ is nonempty and $\overline{T}$ is also nonempty.
This guarantees the existence of a partition $\{S,\overline{S}\}$ of $[d]$ such that either $u \preceq_{S} v$ and $v \preceq_{\overline{S}} u$ or $u \preceq_{\overline{S}} v$ and $v \preceq_{S} u$.
We call such a partition $\{S,\overline{S}\}$ an \emph{order type} for $\{u,v\}$.
It is clear that there exist $S(d,2) = 2^{d-1}-1$ possible order types for $\{u,v\}$ where $S(d,2)$ is a Stirling number of the second kind.
Note that order types of $\{u, v\}$ are not unique, for example, for the two points $u = (1,2,5)$ and $v=(1,3,4)$ in $\RR^3$, $\{u,v\}$ has order types $\{\{1,2\}, \{3\}\}$ and $\{\{2\}, \{1,3\}\}$. 


\begin{Lem}\label{lem:three}
Suppose that each pair of the three points $x_1 ,x_2, x_3 \in \RR^d$ is incomparable and has  a common order type $\{S,\overline{S}\}$. 
Then, for some permutation $\sigma$ on $\{1,2,3\}$, $x_{\sigma(1)} \preceq_{S} x_{\sigma(2)} \preceq_{S} x_{\sigma(3)}$ and $x_{\sigma(1)} \succeq_{\overline{S}} x_{\sigma(2)} \succeq_{\overline{S}} x_{\sigma(3)}$.
\end{Lem}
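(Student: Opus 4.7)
The plan is to reduce the claim to the transitivity of the coordinatewise order $\preceq_S$ on the three points, and then transfer the resulting chain to the $\overline{S}$-side via the order type hypothesis.

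First, I would unpack the hypothesis at the pair level. The assumption that $\{S,\overline{S}\}$ is a common order type means that for every pair $\{x_i,x_j\}$, either ($x_i \preceq_S x_j$ and $x_j \preceq_{\overline{S}} x_i$) or ($x_j \preceq_S x_i$ and $x_i \preceq_{\overline{S}} x_j$). In particular, the relation $\preceq_S$ is comparable on every pair among $x_1,x_2,x_3$. Since $\preceq_S$ is defined by $[u]_k \le [v]_k$ for $k \in S$, it inherits transitivity from the usual order on $\RR$, so $\preceq_S$ restricted to $\{x_1,x_2,x_3\}$ is a transitive, total relation. Hence there is a permutation $\sigma$ on $\{1,2,3\}$ with
\[
x_{\sigma(1)} \preceq_S x_{\sigma(2)} \preceq_S x_{\sigma(3)}.
\]

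Next I would transfer this chain to $\overline{S}$ one consecutive pair at a time. Consider the pair $\{x_{\sigma(1)},x_{\sigma(2)}\}$. Its order type is $\{S,\overline{S}\}$, so one of the two cases above must hold for it. The second case would give $x_{\sigma(2)} \preceq_S x_{\sigma(1)}$ together with $x_{\sigma(1)} \preceq_{\overline{S}} x_{\sigma(2)}$; combined with the already established $x_{\sigma(1)} \preceq_S x_{\sigma(2)}$, the two $S$-relations force equality on $S$, and together with $x_{\sigma(1)} \preceq_{\overline{S}} x_{\sigma(2)}$ this yields $x_{\sigma(1)} \preceq x_{\sigma(2)}$ on all of $[d]$, contradicting the assumed incomparability of $x_{\sigma(1)}$ and $x_{\sigma(2)}$. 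Thus the first case holds, giving $x_{\sigma(2)} \preceq_{\overline{S}} x_{\sigma(1)}$. The identical argument applied to the pair $\{x_{\sigma(2)},x_{\sigma(3)}\}$ produces $x_{\sigma(3)} \preceq_{\overline{S}} x_{\sigma(2)}$, which combines to give the required reverse chain on $\overline{S}$.

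The only subtlety, and what I would treat carefully, is step three: an order type for a pair need not be unique, so a priori both of the two dichotomous cases could be compatible with the data, and I must use the incomparability hypothesis to rule out the undesired one. Everything else is a direct appeal to transitivity of componentwise $\le$. No Erd\H{o}s--Szekeres or more substantial combinatorics is needed for this lemma; it is a structural observation that will later feed into the counting arguments.
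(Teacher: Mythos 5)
Your proof is correct and follows essentially the same route as the paper: establish the $\preceq_S$-chain from totality and transitivity, then flip it on $\overline{S}$ using the order-type dichotomy. The only difference is that you explicitly justify, via the incomparability hypothesis, the equivalence ``$y_1 \preceq_S y_2$ iff $y_1 \succeq_{\overline{S}} y_2$'' that the paper simply asserts — a worthwhile bit of extra care, since that equivalence does require incomparability.
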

\begin{proof}
By the hypothesis, $\preceq_{S}$ is a total order on $\{x_1,x_2,x_3\}$.
Therefore there exists a permutation $\sigma$ on $\{1,2,3\}$ such that $x_{\sigma(1)} \preceq_{S} x_{\sigma(2)} \preceq_{S} x_{\sigma(3)}$.
Since $y_1 \preceq_{S} y_2$ if and only if $y_1 \succeq_{\overline{S}} y_2$ for any pair $\{y_1, y_2\}$ of points in $\RR^d$ with the order type $\{S,\overline{S}\}$, the lemma immediately follows.
\end{proof}

\begin{Lem}\label{lem:1}
For $x,y,z \in \RR^d$ and a nonempty subset $S$ of $[d]$,
suppose that
$y \preceq_{S} x$
and $z \preceq_{\overline{S}} x$.
Then $\min\{y,z\} \preceq x$.
\end{Lem}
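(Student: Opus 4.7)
The plan is to verify the inequality $\min\{y,z\} \preceq x$ coordinate-by-coordinate, splitting the check according to whether the coordinate index belongs to $S$ or to $\overline{S}$. Since $\{S, \overline{S}\}$ partitions $[d]$, every $i \in [d]$ lies in exactly one of the two parts, so this case analysis exhausts all coordinates.

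Concretely, I would fix an arbitrary $i \in [d]$ and show $[\min\{y,z\}]_i \le [x]_i$. If $i \in S$, then the hypothesis $y \preceq_S x$ gives $[y]_i \le [x]_i$, and since $[\min\{y,z\}]_i = \min\{[y]_i, [z]_i\} \le [y]_i$, the desired inequality follows. Symmetrically, if $i \in \overline{S}$, then $z \preceq_{\overline{S}} x$ yields $[z]_i \le [x]_i$, and again $[\min\{y,z\}]_i \le [z]_i \le [x]_i$. Combining both cases, $[\min\{y,z\}]_i \le [x]_i$ holds for every $i \in [d]$, which by the definition of $\preceq$ on $\RR^d$ is exactly the assertion $\min\{y,z\} \preceq x$.

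There is no real obstacle here: the lemma is a direct unpacking of the definitions of $\preceq$, $\preceq_S$, and the pointwise minimum $\min\{y,z\}$ introduced in the notation paragraph. The only thing worth being slightly careful about is that the min is taken coordinate-wise (not as a vector in some lattice sense), so the argument must be carried out entrywise; invoking $\preceq_S$ or $\preceq_{\overline{S}}$ directly on the vector $\min\{y,z\}$ is neither needed nor more efficient than the coordinate argument above.
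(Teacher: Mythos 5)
Your proof is correct and follows exactly the same coordinate-by-coordinate case analysis as the paper's own proof: for $i \in S$ use $[\min\{y,z\}]_i \le [y]_i \le [x]_i$, and for $i \in \overline{S}$ use $[\min\{y,z\}]_i \le [z]_i \le [x]_i$. Nothing to add.
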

\begin{proof}
If $i \in S$, then $[\min \{y,z\}]_i \le [y]_i \le [x]_i$, and if $i \in \overline{S}$, then $[\min \{y,z\}]_i \le [z]_i \le [x]_i$.
Therefore $\min\{y,z\} \preceq x$.
\end{proof}

\begin{Lem}\label{lem:incomparable}
Let $D$ be a $d$-partial order and let $x$ and $y$ be nonadjacent vertices in $C(D)$.
If $x$ and $y$ are non-isolated in $C(D)$, then $x$ and $y$ are incomparable in $(\RR^d, \preceq)$.
\end{Lem}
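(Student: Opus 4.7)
The plan is to argue by contradiction: assume $x$ and $y$ are comparable in $(\RR^d, \preceq)$ and derive the existence of a vertex that both $x$ and $y$ dominate strictly, which would make $x$ and $y$ adjacent in $C(D)$ and contradict the hypothesis.

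Fix an embedding realizing $D$ as $D_S$ for some finite $S\subseteq\RR^d$ and identify each vertex with the corresponding point. By the symmetry of the argument we may assume $x\preceq y$. Since $x$ is non-isolated in $C(D)$, it has a neighbor $x'$ in $C(D)$, and hence by the definition of $C(D_S)$ there exists a vertex $z$ with $z\prec x$ and $z\prec x'$. Next I would combine $z\prec x$ with $x\preceq y$ coordinatewise: for every $i\in[d]$, $[z]_i<[x]_i\le[y]_i$, so $z\prec y$. Consequently $(x,z)$ and $(y,z)$ are both arcs of $D$, making $z$ a common prey of $x$ and $y$; thus $xy$ is an edge of $C(D)$, contradicting the assumed nonadjacency of $x$ and $y$.

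I do not anticipate a real obstacle, as the argument is essentially a direct unfolding of definitions. The only care needed is to keep the strict order $\prec$ (used for arcs, and hence for common prey) separate from the weak order $\preceq$ (used for comparability), and to notice that combining a strict coordinate-wise inequality with a weak one still yields a strict one. The non-isolation hypothesis enters precisely to supply the common prey $z$ that drives the contradiction; without it the statement would fail trivially for any pair of comparable isolated points.
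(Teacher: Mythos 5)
Your proposal is correct and follows essentially the same argument as the paper: assume $x\preceq y$ by symmetry, use non-isolation of $x$ to produce a common prey $z\prec x$, combine $z\prec x$ with $x\preceq y$ coordinatewise to get $z\prec y$, and conclude that $x$ and $y$ are adjacent, a contradiction. No issues.
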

\begin{proof}
Suppose, to the contrary, that $x$ and $y$ are comparable in $(\RR^d, \preceq)$.
Then $x \preceq y$ or $y \preceq x$.
By symmetry, we may assume $x \preceq y$.
Since $x$ is a non-isolated vertex in $C(D)$, $x$ has a neighbor $z$ in $C(D)$.
Then there exists $w \in V(D)$ such that $w$ is a common out-neighbor of $x$ and $z$ in $D$, i.e., $w \prec x$ and $w \prec z$.
By the assumption $x \preceq y$, $w \prec x$ implies $w \prec y$.
Therefore $w$ is a common out-neighbor of $x$ and $y$ in $D$.
Thus $x$ and $y$ are adjacent in $C(D)$, which is a contradiction.
\end{proof}

\section{Partial order competition dimensions of complete multipartite graphs}

We denote by $K_{m \times n}$ the complete multipartite graph $K_{m,m,\ldots,m}$ having $n$ partite sets of size $m$ vertices.
In this section, we show that $\dim_\text{poc}(K_{m \times n}) \ge 6$ if $m$ and $n$ are large enough.

The Erd\"{o}s-Szekeres lemma given in~\cite{erdos1935combinatorial} states that, for any positive integers $r$ and $s$, every sequence consisting of $rs+1$ distinct real numbers has an increasing subsequence of length $r+1$ or a decreasing subsequence of length $s+1$.
The following lemma is an immediate consequence of the Erd\"{o}s-Szekeres lemma.

\begin{Lem}[\cite{erdos1935combinatorial}]\label{lem:erdos}
Let $S$ be a subset of $\RR^2$ with $|S| = n^2+1$.
Then, in the poset $(S, \preceq)$, there exists a chain or an anti-chain of length $n+1$.
\end{Lem}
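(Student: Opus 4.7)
The plan is to sort the points of $S$ lexicographically and then reduce the problem to the classical Erdős-Szekeres lemma stated above. I would index $S = \{p_1, p_2, \dots, p_{n^2+1}\}$ so that $i < j$ implies either $[p_i]_1 < [p_j]_1$, or $[p_i]_1 = [p_j]_1$ and $[p_i]_2 < [p_j]_2$. The key observation is that, under this ordering, for any $i < j$ we have $[p_i]_1 \le [p_j]_1$, and the case $[p_i]_1 = [p_j]_1$ with $[p_i]_2 > [p_j]_2$ is ruled out; therefore $p_i \preceq p_j$ if and only if $[p_i]_2 \le [p_j]_2$, while $p_i$ and $p_j$ are incomparable in $(\RR^2, \preceq)$ if and only if $[p_i]_2 > [p_j]_2$. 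Consequently, any non-decreasing subsequence of $[p_1]_2, \dots, [p_{n^2+1}]_2$ of length $n+1$ gives a chain of length $n+1$ in $(S, \preceq)$ (via transitivity of $\preceq$), and any strictly decreasing subsequence of length $n+1$ gives an antichain of the same length.

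Next, I would extract such a monotone subsequence by invoking the Erdős-Szekeres lemma with $r = s = n$. If the second coordinates $[p_i]_2$ are pairwise distinct, this is immediate. Otherwise, I would replace each $[p_i]_2$ by a perturbed value $z_i = [p_i]_2 + i\epsilon$, with $\epsilon > 0$ sufficiently small that the $z_i$ are pairwise distinct and, for all $i < j$, $z_i < z_j$ if and only if $[p_i]_2 \le [p_j]_2$. Concretely, any $\epsilon$ strictly less than $\min\{|[p_i]_2 - [p_j]_2| : [p_i]_2 \neq [p_j]_2\}/(n^2+2)$ works, and the degenerate case in which all second coordinates coincide yields a chain of length $n^2+1 \geq n+1$ right away. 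Applying the Erdős-Szekeres lemma to the distinct sequence $z_1, \dots, z_{n^2+1}$ produces a strictly monotone subsequence of length $n+1$, which under the correspondence of the previous paragraph translates back to the desired chain or antichain.

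The only genuinely delicate point is the handling of possible repetitions in the second coordinates, since the Erdős-Szekeres lemma as cited requires distinct entries; the $\epsilon$-perturbation above is the standard device for bridging this gap. Everything else in the argument is a direct dictionary between monotone subsequences of $([p_i]_2)_i$ and chains or antichains of the lexicographically sorted points.
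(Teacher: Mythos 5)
Your proof is correct and takes the route the paper intends: the paper states this lemma without proof, calling it an ``immediate consequence'' of the Erd\H{o}s--Szekeres lemma, and your argument (lexicographic sorting, the dictionary between comparability and the order of second coordinates, and the $\epsilon$-perturbation to handle repeated second coordinates) supplies exactly the details the paper omits.
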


\begin{Lem}\label{lem:taking xyz}
For any integers $d$ and $t$ with $d \ge 3$ and $2 \le t \le d$, let $V$ be a subset of $\RR^d$ with $|V| = 2^{2^{d-t+1}}+1$.
Then there exist distinct $x,y,z \in V$ such that, for each $j \in  \{ 1, t, t+1, \ldots, d \}$, either $[x]_j \le [y]_j \le [z]_j$ or $[x]_j \ge [y]_j \ge [z]_j$.
\end{Lem}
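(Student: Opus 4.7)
The plan is to apply Lemma~\ref{lem:erdos} iteratively, peeling off the coordinates of interest one at a time. Write $k := d - t + 2$ for the cardinality of the set $J = \{1, t, t+1, \ldots, d\}$ of coordinates to be controlled, so that $|V| = 2^{2^{k-1}}+1$. The goal is to produce three distinct points of $V$ that can be linearly ordered so as to be simultaneously monotone (either non-decreasing or non-increasing) in every coordinate belonging to $J$; reading them in that order will supply the triple $(x,y,z)$.

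I would construct a nested chain $V = V_0 \supseteq V_1 \supseteq \cdots \supseteq V_{k-1}$, each equipped with a linear ordering, such that $|V_i| = 2^{2^{k-1-i}}+1$ and, along the ordering of $V_i$, the coordinates $1, t, t+1, \ldots, t+i-1$ are all monotone. For the base case, list $V_0 := V$ so that coordinate $1$ is non-decreasing (breaking ties arbitrarily). For the inductive step, suppose $V_{i-1}$ has been listed as $p_1, p_2, \ldots, p_N$ where $N = 2^{2^{k-i}}+1 = (2^{2^{k-1-i}})^2 + 1$, and set $c = t+i-1$. Form the auxiliary planar set
\[
W = \{(j, [p_j]_c) : 1 \le j \le N\} \subseteq \RR^2,
\]
whose $N$ elements are pairwise distinct because their first coordinates are. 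Applying Lemma~\ref{lem:erdos} to $W$ yields a chain or antichain in $(W, \preceq)$ of length $2^{2^{k-1-i}}+1$; read in order of increasing $j$, the second coordinate $[p_j]_c$ is necessarily monotone along this selected collection. The corresponding indices determine $V_i$ as a subsequence of $V_{i-1}$, so $V_i$ inherits the monotonicity in coordinates $1, t, \ldots, c-1$ from $V_{i-1}$ and additionally becomes monotone in coordinate $c$.

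After $k-1$ iterations $V_{k-1}$ has size $2^{2^0}+1 = 3$, and its three elements listed in the constructed order supply the triple $(x,y,z)$ monotone in every coordinate of $J = \{1, t, t+1, \ldots, d\}$, as required. Distinctness of the three points is automatic from $V_{k-1} \subseteq V \subseteq \RR^d$. The main subtlety, which motivates the use of the auxiliary index coordinate, is that Lemma~\ref{lem:erdos} is stated for a subset of $\RR^2$ rather than a multiset; repetitions in the raw coordinate $[p_j]_c$ could otherwise obstruct a direct application. Pairing each value with its sequence index $j$ forces $W$ to be a genuine subset of $\RR^2$ regardless of how many points of $V_{i-1}$ share a common value in coordinate $c$, and is the only step that requires real care.
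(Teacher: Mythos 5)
Your proof is correct and follows essentially the same route as the paper: an iterated application of Lemma~\ref{lem:erdos}, peeling off the coordinates $t, t+1, \ldots, d$ one at a time after first sorting $V$ by coordinate $1$. Your only departure --- pairing each value $[p_j]_c$ with its sequence index $j$ rather than with the first coordinate of the point, as the paper does --- is a minor technical refinement that cleanly handles ties and repeated pairs, and is if anything slightly more careful than the paper's version.
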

\begin{proof}
Suppose that the elements of $V$ are labeled as $a_1, \ldots, a_{2^{2^{d-t+1}}+1}$ so that
\begin{equation}\label{eq:chain}
[a_1]_1 \le [a_2]_1 \le [a_3]_1 \le \cdots \le [a_{2^{2^{d-t+1}}+1}]_1.
\end{equation}
Now $\{([a_i]_1, [a_i]_t) \}_{i=1}^{2^{2^{d-t+1}}+1}$ is a sequence on $\RR^2$ of length $2^{2^{d-t+1}}+1$.
By Lemma~\ref{lem:erdos}, there exists a subset $I_t \subset [2^{2^{d-t+1}}+1]$ of cardinality $2^{2^{d-t}}+1$ such that $\{([a_i]_1, [a_i]_t) \mid i \in I_t \}$ forms a chain or an anti-chain in the poset $(\RR^2, \preceq)$.
By Lemma~\ref{lem:erdos} again, there exists a subset $I_{t+1} \subset I_t$ of cardinality $2^{2^{d-t-1}}+1$ such that $\{([a_i]_1, [a_i]_{t+1}) \mid i \in I_{t+1} \}$ forms a chain or an anti-chain in $(\RR^2, \preceq)$.
We apply Lemma~\ref{lem:erdos} repeatedly in this way to obtain
a set $I_d$ of cardinality $2^{2^0}+1 = 3$ such that $I_d \subset I_{d-1} \subset \cdots \subset I_t \subset [2^{2^{d-t+1}}+1]$ and  $\{([a_i]_1, [a_i]_{j}) \mid i \in I_j \}$ forms a chain or an anti-chain in $(\RR^2, \preceq)$ for each $j=t,t+1,\ldots,d$.
Let $I_d=\{p, q, r\}$ for positive integers $p<q<r$.
Then, by \eqref{eq:chain}, $a_p$, $a_q$, and $a_r$ are desired points in $V$.
\end{proof}

The following lemma plays a key role in proving Theorem~\ref{thm:main2}, which is our main theorem.

\begin{Lem}\label{lem:main}
For positive integers $\alpha$ and $d$ with $d \ge 3$, subsets $V_1, \ldots, V_\alpha$ of $\RR^d$, and a nonempty proper subset $S$ of $[d]$, suppose that the following are true:
\begin{itemize}
\item[(i)] $V_1, \ldots, V_\alpha$ are  mutually disjoint and $|V_i| \ge 3$ for each $i=1,\ldots,\alpha$;
\item[(ii)] either  $|S|=1$ and $\alpha \ge 2$ or $|S| = 2$ and $\alpha \ge 2^{2^{d-2}}+1$;
\item[(iii)] every pair of vertices in $V_i$ is incomparable and has an order type $\{S, \overline{S}\}$ for each $i=1,\ldots,\alpha$.
\end{itemize}
Then there exist $a \in V_i$, $b \in V_j$, $c,d \in V_k$ for positive integers $1 \le i, j, k \le \alpha$ such that $i \neq j$, $c \neq d$, and $\min\{a,b\} \preceq \min\{c,d\}$.
\end{Lem}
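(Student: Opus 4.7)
The plan is to treat the two branches of hypothesis (ii) after a common setup. Since every pair in $V_i$ is incomparable with order type $\{S,\overline{S}\}$, the relation $\preceq_S$ restricts to a total order on $V_i$; I would let $u_i$ and $w_i$ denote its $\preceq_S$-minimum and $\preceq_S$-maximum, well-defined and distinct because $|V_i|\ge 3$. Then $\min\{u_i,w_i\}$ equals the coordinate-wise minimum $m_i:=(\min_{v\in V_i}[v]_\ell)_{\ell\in[d]}$ of $V_i$, so for any distinct $c,d\in V_k$ one has $\min\{c,d\}\succeq m_k$, with equality when $\{c,d\}=\{u_k,w_k\}$. The task therefore reduces to producing $a\in V_i$ and $b\in V_j$ with $i\ne j$ satisfying $\min\{a,b\}\preceq m_k$ for some $k$. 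In Case~1 ($|S|=1$, say $S=\{s\}$) this is immediate: by symmetry assume $[u_1]_s\le[u_2]_s$, and take $a=u_1$, $b=w_2$, $c=u_2$, $d=w_2$; then the $s$-th coordinate of $\min\{a,b\}$ is at most $[u_1]_s\le[u_2]_s=[m_2]_s$, and each $\overline{S}$-coordinate is at most $[w_2]_\ell=[m_2]_\ell$, so $\min\{a,b\}\preceq m_2$.

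For Case~2 ($|S|=2$, WLOG $S=\{1,2\}$, with $\alpha\ge 2^{2^{d-2}}+1$), I would apply Lemma~\ref{lem:taking xyz} with $t=3$ to $\{m_1,\dots,m_\alpha\}$ and extract three points, relabeled $m_1,m_2,m_3$, that are monotone on every coordinate in $\{1,3,4,\dots,d\}$, WLOG with $[m_1]_1\le[m_2]_1\le[m_3]_1$. The key reduction is that the goal is reached as soon as some distinct $p,q\in\{1,2,3\}$ satisfy $m_p\preceq_S m_q$ (set $i=p$, $j=k=q$, $a=u_p$, $b=w_q$, $c=u_q$, $d=w_q$; the verification uses $u_p\preceq_S u_q$ on $S$-coordinates and $[w_q]_\ell=[m_q]_\ell$ on $\overline{S}$-coordinates) or $m_p\preceq_{\overline{S}} m_q$ (swap the roles). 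A short case analysis on the ordering of $[m_1]_2,[m_2]_2,[m_3]_2$ shows that only the strict pattern $[m_3]_2<[m_2]_2<[m_1]_2$ leaves the three pairwise $\preceq_S$-incomparable; in that subcase, if the monotonicity directions on $\overline{S}$-coordinates are uniform (all increasing or all decreasing), a $\preceq_{\overline{S}}$-comparable pair is already present and the reduction applies.

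The main obstacle will be the remaining ``mixed'' subcase of Case~2, where coord 2 is strictly decreasing on $m_1,m_2,m_3$ while the coordinates of $\overline{S}$ split into a nonempty ``increasing'' subset $L$ and a nonempty ``decreasing'' subset $R$. There the three points are pairwise incomparable under both $\preceq_S$ and $\preceq_{\overline{S}}$, so no choice of $a=u_i$, $b=w_j$ with $(i,j,k)\subset\{1,2,3\}$ beats any $m_k$ directly. However, they do form a chain under the refined partition $\{L\cup\{1\},\,R\cup\{2\}\}$ of $[d]$, and I would try to exploit this hidden chain together with the negative correlation between $S$- and $\overline{S}$-coordinates within each $V_i$ by allowing more flexible choices — for instance taking $c=v_k$ and $d=w_k$ for a middle element $v_k$ of $V_k$ supplied by Lemma~\ref{lem:three}, which relaxes the $S$-coordinate constraint on $\min\{c,d\}$ from $(\mu_k,\nu_k)$ to $([v_k]_1,[v_k]_2)$. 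Making this last subcase work cleanly is the main technical difficulty.
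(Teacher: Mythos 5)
Your setup and the reduction it affords are sound: identifying $\min\{u_k,w_k\}$ with the coordinatewise minimum $m_k$ of $V_k$, so that it suffices to get $\min\{a,b\}$ below $m_k$ (or a relaxed target), is correct; Case~1 checks out, as does the reduction in Case~2 to ``some pair of the extracted minima is $\preceq_S$- or $\preceq_{\overline S}$-comparable.'' But the proof is not complete. The mixed subcase you flag at the end is genuine and nonvacuous (e.g.\ $m$'s of the shape $(0,3,0,3)$, $(1,2,1,2)$, $(2,1,2,1)$ are pairwise incomparable under both $\preceq_{\{1,2\}}$ and $\preceq_{\{3,4\}}$), and you only sketch a direction for it. Moreover, the way you set up Case~2 makes that subcase hard to close: you ran Lemma~\ref{lem:taking xyz} on the formal minima $m_1,\ldots,m_\alpha$, so the extracted monotonicity carries no information about any actual point of $V_p,V_q,V_r$ beyond the extremal ones $u$ and $w$, and with only those available every candidate for $\min\{a,b\}$ inherits an uncontrolled coordinate: $\min\{u_p,u_r\}$ is fine on $S$ but carries the $\overline S$-\emph{maxima} of $V_p,V_r$; $\min\{u_p,w_r\}$ can only be bounded on coordinate $2$ by $[m_p]_2>[m_q]_2$; and for a middle element $v_p$ of $V_p$ you have no control over $[v_p]_1$ relative to $[m_q]_1$.

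The paper closes exactly this case by running the Erd\H{o}s--Szekeres extraction on genuine representatives: it picks in each $V_i$ a $\preceq_S$-chain $x_i\preceq_S y_i\preceq_S z_i$ via Lemma~\ref{lem:three}, sorts and extracts on the \emph{middle} elements $y_i$, splits on whether $[x_q]_1<[y_p]_1$, $[x_q]_2<[y_r]_2$, or neither, and in the last branch relaxes the target from $m_q=\min\{x_q,z_q\}$ to $\min\{x_q,y_q\}$; the inequality $[\min\{y_p,y_r\}]_l\le[y_q]_l$ for $l\ge 3$ that finishes it is precisely the monotonicity of the middle elements, which your extraction on the $m_i$ does not supply. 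So the missing idea is concrete: perform the extraction on points of the $V_i$ themselves and allow $\{c,d\}$ to be the pair $\{x_q,y_q\}$ rather than the extremal pair. A secondary, easily repaired point: the $m_i$ need not be distinct points of $\RR^d$, so applying Lemma~\ref{lem:taking xyz} to the set $\{m_1,\ldots,m_\alpha\}$ needs a word (coincident minima give $m_i\preceq m_j$ and hence an immediate win via $\min\{u_i,w_j\}\preceq m_j$).
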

\begin{proof}
By (ii), we consider the two cases: $|S|=1$ and $\alpha \ge 2$; $|S| = 2$ and $\alpha \ge 2^{2^{d-2}}+1$.

\textit{Case 1. $|S|=1$ and $\alpha \ge 2$.}
Without loss of generality, we may assume $S = \{1\}$.
By (i), we may take four distinct points $x,y \in V_1$ and $z,w \in V_2$.
By (iii), $\{ \{1\}, \{2,\ldots,d\}\}$ is an order type for both $\{x,y\}$ and $\{z,w\}$.
Without loss of generality, we may assume that
\begin{equation*}\label{eq:1}
x \preceq_{\{1\}} y, \quad y \preceq_{\{2,\ldots,d\}} x, \quad z \preceq_{\{1\}} w, \quad w \preceq_{\{2,\ldots,d\}} z.
\end{equation*}
In addition, we may assume
\begin{equation*}\label{eq:2}
[x]_1 \le [z]_1 \quad \text{or equivalently} \quad x \preceq_{\{1\}} z.
\end{equation*}
Then, since $w \preceq_{\{2,\ldots,d\}} z$, $\min\{x,w\} \preceq z$ by Lemma~\ref{lem:1}.
Moreover, $\min\{x,w\} \preceq w$ by definition.
Thus $\min\{x,w\} \preceq \min\{z,w\}$.
Now $i:=1$, $j:=2$, $k:=2$, $a:=x$, $b:=w$, $c:=w$, $d:=z$ satisfy the conclusion given in the lemma statement and this completes the proof in Case 1.

\textit{Case 2. $|S| = 2$ and $\alpha \ge 2^{2^{d-2}}+1$.}
Without loss of generality, we may assume $S = \{1,2\}$.
Fix $i \in [\alpha]$.
We may take three distinct points $x_i, y_i, z_i \in V_i$ by (i).
By (iii), each pair of $x_i, y_i, z_i$ has $\{S, \overline{S}\}$ as an order type.
Then, by Lemma~\ref{lem:three}, we may assume that
\begin{equation}\label{eqn:3}
x_i \preceq_{\{1,2\}} y_i \preceq_{\{1,2\}} z_i \quad \text{and} \quad z_i \preceq_{\{3,\ldots,d\}} y_i \preceq_{\{3,\ldots,d\}} x_i.
\end{equation}

Without loss of generality, we may assume
\begin{equation}\label{eq:x-linear-1st}
[y_1]_1 \le [y_2]_1 \le [y_3]_1 \le \cdots \le [y_\alpha]_1.
\end{equation}

Suppose that there exist distinct $i, j \in [\alpha]$ such that $i<j$ and $[y_i]_2 \le [y_j]_2$.
Then, since $[y_i]_1 \le [y_j]_1$ by \eqref{eq:x-linear-1st}, $y_i \preceq_{\{1,2\}} y_j$.
In addition, $z_j \preceq_{\{3,\ldots,d\}} y_j$ by \eqref{eqn:3}.
Therefore $\min \{y_i, z_j\} \preceq y_j$ by Lemma~\ref{lem:1}, which implies $\min \{y_i, z_j\} \preceq \min \{y_j, z_j\}$.
Then $k:=j$, $a:=y_i$, $b:=z_j$, $c:=y_j$, $d:=z_j$ satisfy the lemma conclusion given in the lemma statement.

Now suppose that
\begin{equation}\label{eq:x-linear-2nd}
[y_1]_2 > [y_2]_2 > [y_3]_2 > \cdots > [y_\alpha]_2.
\end{equation}
Note that $\{y_i \mid i \in [\alpha]\}$ is a subset of $\RR^d$ with cardinality $\alpha \ge 2^{2^{d-2}}+1$.
By Lemma~\ref{lem:taking xyz}, there exist distinct $p,q,r \in [\alpha]$ such that $p<q<r$ and, for each $j=1, 3, 4, \ldots, d$, either $[y_p]_j \le [y_q]_j \le [y_r]_j$ or $[y_p]_j \ge [y_q]_j \ge [y_r]_j$, which implies
$[\min \{y_p, y_r\}]_j \le [y_q]_j$.
Thus, by \eqref{eq:x-linear-2nd}, 
\begin{equation}\label{eq:min}
[\min \{y_p, y_r\}]_j \le [y_q]_j
\end{equation}
for each $j \in [d]$.

Now one of the following is true:
\[
[x_q]_1 < [y_p]_1; \quad
[x_q]_2 < [y_r]_2; \quad
[y_p]_1 \le [x_q]_1 \text{ and } [y_r]_2 \le [x_q]_2.
\]
We first consider the subcase $[x_q]_1 < [y_p]_1$.
Since $[x_q]_2 \le [y_q]_2$ by \eqref{eqn:3} and $[y_q]_2 < [y_p]_2$ by \eqref{eq:x-linear-2nd}, we have $[x_q]_2 < [y_p]_2$.
Then, by the subcase assumption that $[x_q]_1 < [y_p]_1$,
$x_q \preceq_{\{1,2\}} y_p$.
In addition, $z_p \preceq_{\{3,\ldots,d\}} y_p$ by~\eqref{eqn:3}.
Therefore $\min \{x_q, z_p\} \preceq y_p$ by Lemma~\ref{lem:1}.
Thus $\min \{x_q, z_p\} \preceq \min \{y_p,z_p\}$.
Then $i:=q$, $j:=p$, $k:=p$, $a:=x_q$, $b:=z_p$, $c:=y_p$, $d:=z_p$ satisfy the conclusion given in the lemma statement.

Secondly, we consider the subcase $[x_q]_2 < [y_r]_2$.
Since $[x_q]_1 \le [y_q]_1$ by \eqref{eqn:3} and $[y_q]_1 \le [y_r]_1$ by \eqref{eq:x-linear-1st}, we have $[x_q]_1 \le [y_r]_1$.
Then, by the subcase assumption that $[x_q]_2 < [y_r]_2$, $x_q \preceq_{\{1,2\}} y_r$.
In addition, $z_r \preceq_{\{3,\ldots,d\}} y_r$.
Therefore $\min \{x_q, z_r\} \preceq y_r$ by Lemma~\ref{lem:1}.
Thus $\min \{x_q, z_r\} \preceq \min \{y_r,z_r\}$.
Then $i:=q$, $j:=r$, $k:=r$, $a:=x_q$, $b:=z_r$, $c:=y_r$, $d:=z_r$ satisfy the conclusion given in the lemma statement.

Finally, we consider the subcase $[y_p]_1 \le [x_q]_1$ and $[y_r]_2 \le [x_q]_2$.
Take $l \in [d]$.
If $l=1$ or $l=2$, then $[\min \{y_p, y_r\}]_l \le [x_q]_l = [\min\{x_q,y_q\}]_l$ by the subcase assumption and \eqref{eqn:3}.
If $l \ge 3$, then $[\min \{y_p, y_r\}]_l \le [y_q]_l = [\min\{x_q, y_q\}]_l$ by \eqref{eq:min} and \eqref{eqn:3}.
Therefore $\min \{y_p, y_r\} \preceq \min\{x_q,y_q\}$ by Lemma~\ref{lem:1}.
Then $i:=p$, $j:=r$, $k:=q$, $a:=y_p$, $b:=y_r$, $c:=x_q$, $d:=y_q$ satisfy the conclusion given in the lemma statement and this completes the proof.
\end{proof}

Now we present a theorem which asserts that a complete multipartite graph satisfying certain properties.

\begin{Thm}\label{thm:main1}
Let $d$ be an integer with $d \ge 3$ and $D$ be a $d$-partial order.
For $\beta := 2^{2^{d-1}}+1$ and $\gamma := {d \choose 2}2^{2^{d-1}}+d+1$, suppose that $W_1, \ldots, W_\gamma$ are mutually disjoint subsets of $V(D)$ satisfying the following conditions:
\begin{itemize}
\item[(i)] $|W_i| = \beta$ for each $i \in [\gamma]$
\item[(ii)] For every $i \in [\gamma]$, every pair in $W_i$ is incomparable and has an order type with a part of size $1$ or $2$.
\end{itemize}
Then the subgraph of $C(D)$ induced by $\bigcup_{i=1}^\gamma W_i$ cannot be the complete multipartite graph $K_{\beta \times \gamma}$ with partite sets $W_1, \ldots, W_\gamma$.
\end{Thm}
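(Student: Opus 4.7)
The plan is to assume for contradiction that the induced subgraph is $K_{\beta \times \gamma}$ with partite sets $W_1, \ldots, W_\gamma$, extract from each $W_i$ a three-point subset $V_i$ all of whose pairs share a small-part order type, pigeonhole the indices so that many $V_i$'s share the \emph{same} order type $\{S, \overline{S}\}$, apply Lemma~\ref{lem:main}, and convert its output into a forbidden edge inside one partite set.

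To construct $V_i$, I would apply Lemma~\ref{lem:taking xyz} with $t = 2$ to $W_i$; since $|W_i| = \beta = 2^{2^{d-1}}+1$ and the index set $\{1, t, t+1, \ldots, d\}$ becomes all of $[d]$, the lemma gives distinct $x_i, y_i, z_i \in W_i$ whose coordinates are monotone in every $j \in [d]$. Hypothesis~(ii) then furnishes an order type $\{T_i, \overline{T_i}\}$ for $\{x_i, z_i\}$ with $|T_i| \in \{1, 2\}$; after possibly relabeling $x_i$ and $z_i$, assume $x_i \preceq_{T_i} z_i$ and $z_i \preceq_{\overline{T_i}} x_i$. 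The coordinate-wise monotonicity then forces $x_i \preceq_{T_i} y_i \preceq_{T_i} z_i$ and $z_i \preceq_{\overline{T_i}} y_i \preceq_{\overline{T_i}} x_i$, so $\{T_i, \overline{T_i}\}$ is a common order type for all three pairs in $V_i := \{x_i, y_i, z_i\}$. Now pigeonhole on the $\{T_i, \overline{T_i}\}$: there are $d$ choices with $|T_i| = 1$ and $\binom{d}{2}$ with $|T_i| = 2$, so if no size-$1$ type were shared by two indices and no size-$2$ type by $2^{2^{d-1}}+1$ indices, the total would be at most $d + \binom{d}{2}\cdot 2^{2^{d-1}} < \gamma$. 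Hence either some size-$1$ type is shared by at least $2$ indices, or some size-$2$ type is shared by at least $2^{2^{d-1}}+1 \geq 2^{2^{d-2}}+1$ indices; in either case Lemma~\ref{lem:main} applies with $\{S, \overline{S}\}$ equal to the shared order type and $\alpha$ equal to the number of sharing indices.

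Lemma~\ref{lem:main} then supplies $a \in V_i$, $b \in V_j$, $c, e \in V_k$ with $i \ne j$, $c \ne e$, and $\min\{a, b\} \preceq \min\{c, e\}$. Because $V_i \subseteq W_i$ and $V_j \subseteq W_j$ lie in different partite sets of the assumed $K_{\beta \times \gamma}$, vertices $a$ and $b$ are adjacent in $C(D)$, so some $w \in V(D)$ satisfies $w \prec a$ and $w \prec b$, equivalently $w \prec \min\{a, b\}$; combining with $\min\{a, b\} \preceq \min\{c, e\}$ gives $w \prec c$ and $w \prec e$, so $c$ and $e$ are adjacent in $C(D)$ while lying in the common partite set $W_k$, contradicting the $K_{\beta \times \gamma}$ hypothesis. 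The main obstacle is the propagation step in the construction of $V_i$: I must verify carefully, using the allowed equalities in $\preceq_{T_i}$, that the order type guaranteed by~(ii) for the extremal pair $\{x_i, z_i\}$ is genuinely compatible with the coordinate-wise monotonicity provided by Lemma~\ref{lem:taking xyz}, so that it propagates to the intermediate point $y_i$ without the direction in any single coordinate flipping.
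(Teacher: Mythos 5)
Your proposal is correct and follows essentially the same route as the paper: extract a coordinate-monotone triple from each $W_i$ via Lemma~\ref{lem:taking xyz} with $t=2$, pigeonhole the resulting common order types, and feed the matching families into Lemma~\ref{lem:main} to manufacture an edge inside a single partite set. The only differences are cosmetic --- you merge the paper's two-stage argument (first ruling out a repeated size-$1$ type via a separate application of Lemma~\ref{lem:main}, then pigeonholing the size-$2$ types) into one unified count, and you explicitly verify the propagation of the order type of $\{x_i,z_i\}$ to the middle point $y_i$, a step the paper merely asserts.
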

\begin{proof}
Let $G$ be the subgraph of $C(D)$ induced by $\bigcup_{i=1}^\gamma W_i$.
Suppose, to the contrary, that $G$ is the complete multipartite graph $K_{\beta \times \gamma}$ with partite sets $W_1, \ldots, W_\gamma$.
Fix $i \in [\gamma]$.
Since $|W_i| = \beta = 2^{2^{d-1}}+1$, by Lemma~\ref{lem:taking xyz}, there exist three distinct points $x_i, y_i, z_i \in W_i$ such that, for each $j \in  [d]$, either $[x_i]_j \le [y_i]_j \le [z_i]_j$ or $[x_i]_j \ge [y_i]_j \ge [z_i]_j$.
Let $V_i = \{x_i,y_i,z_i\}$.
Then every pair in $V_i$ have a common type with a part of size $1$ or $2$, so the condition (iii) in Lemma~\ref{lem:main} is satisfied.
We take one of such common order types and denote it by $\TTT_i$.
Clearly $V_1, \ldots, V_\gamma$ satisfy the condition (i) in Lemma~\ref{lem:main}.


Suppose, to the contrary, that there exist distinct $i,j \in [\gamma]$ for which $\TTT_i$ and $\TTT_j$ are equal and have a part of size $1$.
By taking $V_i$ and $V_j$ as the subsets of $\RR^d$ satisfying the conditions (i), (ii), and (iii) of Lemma~\ref{lem:main} ($V_i$ and $V_j$ satisfy the first part of the condition (ii)),
we may conclude that there exist $a_i \in V_i$, $a_j \in V_j$, $a_{k_1},a_{k_2} \in V_k$ for $k \in \{i, j\}$ such that $c \neq d$ and $\min\{a_i,a_j\} \preceq \min \{a_{k_1},a_{k_2}\}$.
Since $i \neq j$, $a_i$ and $a_j$ belong to distinct partite sets of $G$ and so $a_ia_j \in E(G)$.
Then there exists $w \in V(D)$ such that $w \prec \min\{a_i,a_j\}$ by the definition of competition graph.
Since $\min\{a_i,a_j\} \preceq \min \{a_{k_1},a_{k_2}\}$, $w \prec \min \{a_{k_1},a_{k_2}\}$.
This implies $a_{k_1}a_{k_2} \in E(G)$, which contradicts the fact that $a_{k_1}$ and $a_{k_2}$ belong to the same partite set of $G$.
Therefore, for each $l \in [d]$, there exists at most one $i \in [\gamma]$ such that $\TTT_i = \{\{l\}, [d]\setminus\{l\}\}$.
Thus, among $\TTT_1, \ldots, \TTT_\gamma$, there exist at most $d$ order types each of which has a part of size $1$.

By rearranging the elements in $\{\TTT_i \mid i \in [\gamma]\}$ if necessary, we may assume that the elements in $\{\TTT_i \mid i \in [\gamma]\}$ each of which has a part of size $1$
have the largest subscripts.
Then, by the condition (ii), every element in $\{\TTT_i \mid i \in [\gamma-d]\}$ ($\gamma-d \ge 1$ by the definition of $\gamma$) has a part of size $2$.
Since there are ${d \choose 2}$ possible order types each of which has a part of size $2$,
the pigeonhole principle guarantees that there exist at least $\lceil {(\gamma-d)}/{{d \choose 2}} \rceil = 2^{2^{d-1}}+1 =: \alpha$ elements in $\{\TTT_i \mid i \in [\gamma-d]\}$ of the same order type.
By rearranging the elements in $\{\TTT_i \mid i \in [\gamma]\}$ if necessary, we may assume that $\TTT_1, \TTT_2, \ldots, \TTT_\alpha$ are the same order type.
Then, for $V_1, \ldots, V_\alpha$, the second part of condition (ii) in Lemma~\ref{lem:main} is satisfied.
Thus, by Lemma~\ref{lem:main}, there exist $a_i \in V_i$, $a_j \in V_j$, $a_{k_1},a_{k_2} \in V_k$ for some $i,j,k \in [\alpha]$ with $i \neq j$ such that $a_{k_1} \neq a_{k_2}$ and $\min\{a_i,a_j\} \preceq \min \{a_{k_1},a_{k_2}\}$.
Then, by applying the same argument as above, we may reach the conclusion that $a_{k_1}a_{k_2} \in E(G)$, which is impossible.
\end{proof}

The following theorem is an immediate consequence of Theorem~\ref{thm:main1}.

\begin{Thm}\label{thm:main2}
For $d \in \{4,5\}$, any graph containing $K_{\beta \times \gamma}$ as an induced subgraph for $\beta := 2^{2^{d-1}}+1$, $\gamma := {d \choose 2} 2^{2^{d-1}} + d + 1$ has partial order competition dimension at least $d+1$.
\end{Thm}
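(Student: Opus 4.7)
The plan is to combine Proposition~\ref{prop:induced}, Lemma~\ref{lem:incomparable}, and the freshly proved Theorem~\ref{thm:main1}. By Proposition~\ref{prop:induced}, since the partial order competition dimension is monotone under induced subgraphs, it suffices to show that $\dim_{\text{poc}}(K_{\beta\times\gamma}) \ge d+1$ for each $d \in \{4,5\}$. So I would suppose, for contradiction, that $\dim_{\text{poc}}(K_{\beta\times\gamma}) \le d$, which means there exist a finite $S \subseteq \RR^d$ and a nonnegative integer $k$ such that $K_{\beta\times\gamma}\cup I_k = C(D_S)$. Let $W_1,\dots,W_\gamma$ be the partite sets of $K_{\beta\times\gamma}$, viewed as subsets of $V(D_S) \subseteq \RR^d$.

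Next I would verify the hypotheses of Theorem~\ref{thm:main1} for these $W_i$. Condition~(i) is immediate because $|W_i| = \beta$ for every $i \in [\gamma]$ by construction. For condition~(ii), note that each vertex of $K_{\beta\times\gamma}$ has a neighbor (in fact many), so every vertex of $K_{\beta\times\gamma}$ is non-isolated in $C(D_S)$; moreover two vertices lying in the same partite set $W_i$ are nonadjacent in $C(D_S)$. Lemma~\ref{lem:incomparable} therefore forces any two vertices in $W_i$ to be incomparable in $(\RR^d,\preceq)$, and hence to admit some order type $\{T,\overline T\}$ as defined in Section~2.

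The decisive observation is that for $d \le 5$ every partition $\{T,\overline T\}$ of $[d]$ into two nonempty parts has a part of size at most $\lfloor d/2\rfloor \le 2$. Consequently every pair in each $W_i$ has an order type with a part of size $1$ or $2$, which is precisely condition~(ii) of Theorem~\ref{thm:main1}. Applying Theorem~\ref{thm:main1}, the induced subgraph of $C(D_S)$ on $\bigcup_{i=1}^\gamma W_i$ cannot be $K_{\beta\times\gamma}$ with partite sets $W_1,\dots,W_\gamma$, contradicting the assumption. This forces $\dim_{\text{poc}}(K_{\beta\times\gamma}) \ge d+1$, and the induced-subgraph monotonicity finishes the proof.

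I do not anticipate a serious obstacle here since Theorem~\ref{thm:main1} has done all the heavy lifting; the only subtle point, and the one to state explicitly, is the arithmetic fact that restricts the argument to $d \in \{4,5\}$ — namely that $\lfloor d/2\rfloor \le 2$ — so that every possible order type is covered by hypothesis~(ii) of Theorem~\ref{thm:main1}. For $d \ge 6$ this would fail because order types with both parts of size at least $3$ appear, and a new idea beyond Theorem~\ref{thm:main1} would be needed.
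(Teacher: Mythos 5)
Your proposal is correct and follows essentially the same route as the paper: reduce to $K_{\beta\times\gamma}$ via Proposition~\ref{prop:induced}, use Lemma~\ref{lem:incomparable} to get incomparability within each partite set, observe that for $d\in\{4,5\}$ every order type has a part of size $1$ or $2$, and conclude by Theorem~\ref{thm:main1}. Your explicit remark that $\lfloor d/2\rfloor\le 2$ is exactly the (implicit) arithmetic point in the paper's proof, so there is nothing to add.
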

\begin{proof}
By Proposition~\ref{prop:induced}, it suffices to show $\dim_{\rm poc}(K_{\beta \times \gamma}) \ge d+1$.
Suppose, to the contrary, that $\dim_{\rm poc}(K_{\beta \times \gamma}) \le d$.
Then $K_{\beta \times \gamma}$ together with some additional isolated vertices is the competition graph of a $d$-partial order $D$.
Let $W_1, \ldots, W_\gamma$ be the partite sets of $K_{\beta \times \gamma}$.
Then clearly $W_1, \ldots, W_\gamma$ are mutually disjoint subsets of $V(D)$ satisfying the condition (i) in Theorem~\ref{thm:main1}.
Furthermore, no vertex in $W_i$ is isolated and no two vertices in $W_i$ are adjacent in $K_{\beta \times \gamma}$.
Thus, by Lemma~\ref{lem:incomparable}, each pair in $W_i$ is incomparable in $(\RR^d, \preceq)$ for each $i \in [\gamma]$ and so has an order type.
Since $D$ is a $d$-partial order and $d \in \{4,5\}$, its order type has a part of size $1$ or $2$.
Therefore $W_1, \ldots, W_\gamma$ satisfy the condition (ii) in Theorem~\ref{thm:main1}.
Then, by Theorem~\ref{thm:main1}, the subgraph of $C(D)$ induced by $\bigcup_{i=1}^\gamma W_i$ cannot be $K_{\beta \times \gamma}$, which contradicts the choice of $D$.
\end{proof}


\section{Concluding Remarks}

We conjecture that, for a given positive integer $d$, $\dim_\text{poc}(K_{\beta \times \gamma}) > d$ for sufficiently large positive integers $\beta$ and $\gamma$.

\section*{Acknowledgments}

The first and third author's research was supported by
the National Research Foundation of Korea(NRF) funded by the Korea government(MEST) (No.\ NRF-2015R1A2A2A01006885, No.\ NRF-2017R1E1A1A03070489) and by the Korea government(MSIP)
(No.\ 2016R1A5A1008055).
The first and second author's research was supported by Basic Science Research Program through the National Research Foundation of Korea(NRF) funded by the Ministry of Education(NRF-2018R1D1A1B07049150).



\begin{thebibliography}{10}

\bibitem{cho2005class}
Han~Hyuk Cho and Suh-Ryung Kim.
\newblock A class of acyclic digraphs with interval competition graphs.
\newblock {\em Discrete Applied Mathematics}, 148(2):171--180, 2005.

\bibitem{choi2018partial}
Jihoon Choi, Soogang Eoh, Suh-Ryung Kim, Jung~Yeun Lee, and Yoshio Sano.
\newblock The partial order competition dimensions of bipartite graphs.
\newblock {\em Discrete Applied Mathematics}, 2018.

\bibitem{choi2016competition}
Jihoon Choi, Kyeong~Seok Kim, Suh-Ryung Kim, Jung~Yeun Lee, and Yoshio Sano.
\newblock On the competition graphs of $d$-partial orders.
\newblock {\em Discrete Applied Mathematics}, 204:29--37, 2016.

\bibitem{choi2017partial}
Jihoon Choi, Suh-Ryung Kim, Jung~Yeun Lee, and Yoshio Sano.
\newblock On the partial order competition dimensions of chordal graphs.
\newblock {\em Discrete Applied Mathematics}, 222:89--96, 2017.

\bibitem{cohen1968interval}
Joel~E Cohen.
\newblock Interval graphs and food webs: a finding and a problem.
\newblock {\em RAND Corporation Document}, 17696, 1968.

\bibitem{erdos1935combinatorial}
P.~Erdös and G.~Szekeres.
\newblock A combinatorial problem in geometry.
\newblock {\em Compositio Mathematica}, 2:463--470, 1935.

\bibitem{kim2009niche}
Suh-Ryung Kim, Jung~Yeun Lee, Boram Park, Won~Jin Park, and Yoshio Sano.
\newblock The niche graphs of doubly partial orders.
\newblock {\em Congressus Numerantium}, 195:19--32, 2009.

\bibitem{kim2014competition}
Suh-Ryung Kim, Jung~Yeun Lee, Boram Park, and Yoshio Sano.
\newblock The competition hypergraphs of doubly partial orders.
\newblock {\em Discrete Applied Mathematics}, 165:185--191, 2014.

\bibitem{lu2009two}
Junjie Lu and Yaokun Wu.
\newblock Two minimal forbidden subgraphs for double competition graphs of
  posets of dimension at most two.
\newblock {\em Applied Mathematics Letters}, 22(6):841--845, 2009.

\bibitem{park2011m}
Boram Park, Jung~Yeun Lee, and Suh-Ryung Kim.
\newblock The $m$-step competition graphs of doubly partial orders.
\newblock {\em Applied Mathematics Letters}, 24(6):811--816, 2011.

\bibitem{park2013phylogeny}
Boram Park and Yoshio Sano.
\newblock The phylogeny graphs of doubly partial orders.
\newblock {\em Discussiones Mathematicae Graph Theory}, 33(4):657--664, 2013.

\bibitem{wu2010dimension}
Yaokun Wu and Junjie Lu.
\newblock Dimension-2 poset competition numbers and dimension-2 poset double
  competition numbers.
\newblock {\em Discrete Applied Mathematics}, 158(6):706--717, 2010.

\end{thebibliography}
\end{document}